\documentclass[11pt]{article}
\usepackage[top=2.5cm, bottom=2.5cm, left=3.5cm, right=3.5cm]{geometry}

\usepackage{graphicx}
\usepackage{amsmath}
\usepackage{amssymb}
\usepackage{algorithm}
\usepackage{algorithmic}
\usepackage{overpic}
\usepackage{booktabs}
\usepackage{multirow}
\usepackage{array}
\usepackage{caption}
\usepackage{subcaption}
\usepackage{hyperref}
\usepackage{xcolor}
\usepackage{ifthen}
\usepackage{amsthm} 

\newtheorem{assumption}{Assumption}
\newtheorem{proposition}{Proposition}
\newtheorem{corollary}{Corollary}

\theoremstyle{definition}
\newtheorem{definition}{Definition}
\floatname{algorithm}{Algorithm}

\hypersetup{
    colorlinks=true,
    linkcolor=blue,
    citecolor=blue,
    urlcolor=blue
}

\newcommand{\safeincludegraphics}[2][]{%
    \IfFileExists{#2}{%
        \includegraphics[#1]{#2}%
    }{%
        \fbox{\parbox[c][0.24\textheight][c]{0.9\linewidth}{\centering Missing figure}}%
    }%
}

\newcommand{\Psit}{\Psi_{\theta}}
\newcommand{\Jpsi}{J_{\Psi}}

\title{Analysis of Error Propagation in Autoencoder-Based Reduced-Order Neural Ordinary Differential Equations}
\author{Jingyi Zhang and Gwanghyun Jo}
\date{}

\begin{document}

\maketitle

\begin{abstract}

Neural ODE reduced-order models often achieve comparable local prediction accuracy, yet their long-horizon extrapolation behavior can differ substantially. To analyze this discrepancy, we develop a path-integral identity that separates local discrepancy injection from amplification in the learned latent dynamics. The associated multi-step Jacobian norms quantify transport sensitivity and distinguish different propagation regimes.

Experiments on the Burgers and Gray--Scott systems exhibit two distinct patterns of error evolution. In Burgers systems, prediction errors remain bounded and are primarily associated with persistent local discrepancies. In contrast, Gray--Scott systems exhibit pronounced amplification during extrapolation, where Jacobian norms serve as sensitivity diagnostics rather than direct indicators of physical prediction accuracy.

\end{abstract}

\section{Introduction}
\label{sec:introduction}

Long-timescale simulation of nonlinear partial differential equations (PDEs) is a central task in many scientific and engineering applications, including fluid dynamics \cite{pope2000turbulent}, climate modeling, chemical reactions \cite{epstein1998introduction}, and engineering optimization \cite{nocedal2006numerical}. Because direct numerical simulation is often computationally expensive, reduced-order models (ROMs) seek to approximate the system evolution in a much lower-dimensional space while preserving the dominant dynamics \cite{benner2015survey,quarteroni2016reduced,hairer1993solving,hairer1996solving,ascher1998computer,butcher2016numerical}.

Classical ROM methods are largely based on linear projection techniques \cite{holmes2012turbulence,sirovich1987turbulence,berkooz1993proper}. Among them, Proper Orthogonal Decomposition (POD) provides energy-optimal spatial bases \cite{berkooz1993proper}, whereas Dynamic Mode Decomposition (DMD) focuses on temporal evolution \cite{schmid2010dmd,rowley2009spectral}. These approaches perform well for weakly nonlinear systems but become less effective when the underlying dynamics involve sharp gradients or complex nonlinear structures.

Recent learning-based ROMs address this limitation by introducing nonlinear latent representations through autoencoders \cite{hinton2006reducing,lee2020model,otto2019linearly,hesthaven2018non,fresca2021comprehensive,fresca2022pod}. Other developments include operator-learning frameworks such as DeepONet and Fourier Neural Operators \cite{lu2021learning,li2021fourier,kovachki2023neural}, together with physics-informed neural networks \cite{raissi2019physics,karniadakis2021physics}. Although these methods have substantially improved predictive capability, long-horizon performance is still assessed primarily through prediction errors \cite{vlachas2018data,pathak2018model,chattopadhyay2020long}. Such evaluations describe the final prediction quality but provide limited information about how errors evolve during autonomous rollout.

Neural ODEs \cite{chen2018neural} and their extensions \cite{kidger2020neural,qian2020lift,champion2019data,lusch2018deep} model latent dynamics as continuous-time differential equations. During recursive rollout, however, small local approximation errors accumulate and interact with the learned dynamics, so similar local prediction accuracy does not necessarily imply similar long-term behavior. Understanding how these errors propagate is therefore essential for evaluating Neural ODE reduced-order models.

This work focuses on the mechanisms governing rollout error propagation rather than prediction accuracy alone. During autonomous rollout, accumulated errors arise from two distinct processes: local discrepancies introduced at each prediction step and the transport of deviations that already exist in the latent trajectory. Local reconstruction metrics cannot distinguish between these two effects.

To analyze this behavior, we derive a path-integral identity that separates rollout error propagation into local discrepancy injection and transport amplification. Based on this decomposition, we introduce a Jacobian-based transport diagnostic that quantifies amplification along the learned latent dynamics. The resulting framework distinguishes weakly amplifying systems, where long-term behavior is governed primarily by local discrepancy injection, from strongly amplifying systems, where transport amplification becomes dominant. The proposed analysis is evaluated using 1D and 2D Burgers equations together with the Gray--Scott reaction--diffusion system, which together represent the two propagation regimes considered in this study.

The remainder of this paper is organized as follows. Section~2 introduces the reduced-order modeling framework and the boundary-initialized rollout protocol. Section~3 presents the theoretical analysis of rollout error propagation and transport amplification. Section~4 reports the experimental results for the Burgers and Gray--Scott systems. Finally, Section~5 concludes the paper.

\section{Method}
\label{sec:method}

This section presents the proposed autoencoder-based reduced-order Neural ODE framework. We first review the standard Neural Ordinary Differential Equation (Neural ODE) formulation \cite{chen2018neural} and its numerical realization through ODESolve. The formulation is then specialized to latent-space dynamics learned by the autoencoder, leading to the reduced-order prediction framework employed throughout this work.

\subsection{Neural Ordinary Differential Equations}
\label{sec:neural_ode}
In this subsection, we briefly review the standard Neural Ordinary
Differential Equation (Neural ODE) framework proposed by
Chen et al.~\cite{chen2018neural}. Unlike conventional discrete-time
models that directly predict the next state, Neural ODE parameterizes
the instantaneous vector field governing the continuous-time evolution.

Given a trajectory dataset
\[
\{h(t_0),h(t_1),\ldots,h(t_N)\},
\]
the objective of Neural ODE is to learn a neural network
$f_\theta$, whose induced continuous-time dynamics reproduce the observed trajectory.
The dynamics are modeled as the initial value problem
\begin{equation}
\frac{dh(t)}{dt}
=
f_\theta(h(t),t),
\qquad
h(t_0)=h_0,
\label{eq:standard_neural_ode}
\end{equation}
where $h(t)$ denotes the hidden state and
$f_\theta$ approximates the underlying continuous vector field.
Since Eq.~\eqref{eq:standard_neural_ode} generally admits no closed-form
solution, the state evolution is obtained by numerical integration. 
Throughout this work, numerical integration is implemented using the
fourth-order Runge--Kutta (RK4) method.
For completeness, we briefly summarize the RK4 scheme.
For each interval $[t_n,t_{n+1}]$, RK4 evaluates the learned vector
field at four intermediate stages,
\begin{equation}
\begin{aligned}
k_1
&=
f_\theta(h_n,t_n),\\
k_2
&=
f_\theta
\left(
h_n+\frac{\Delta t}{2}k_1,
t_n+\frac{\Delta t}{2}
\right),\\
k_3
&=
f_\theta
\left(
h_n+\frac{\Delta t}{2}k_2,
t_n+\frac{\Delta t}{2}
\right),\\
k_4
&=
f_\theta
\left(
h_n+\Delta t\,k_3,
t_n+\Delta t
\right),
\end{aligned}
\label{eq:rk4_stages}
\end{equation}
which produces the state update
\begin{equation}
\operatorname{RK4}(f_\theta,h_n,t_n,t_{n+1}):=
h_n
+
\frac{\Delta t}{6}
(k_1+2k_2+2k_3+k_4).
\label{eq:rk4_update}
\end{equation}
Whenever no ambiguity arises, $\operatorname{RK4}(f_\theta,h_n,t_n,t_{n+1})$
will simply be denoted by $\operatorname{RK4}(f_\theta,h_n)$.

During training, the parameters $\theta$ are optimized so that the
trajectory generated by repeated RK4 integration matches the observed
trajectory.
Because the RK4 integration consists entirely of differentiable
operations, gradients can be propagated through the integration
procedure using automatic differentiation, allowing the parameters of
the vector field $f_\theta$ to be optimized.
A common training objective is to minimize the discrepancy between the
predicted state and the one-step-ahead reference state over the training
interval, i.e.,
\begin{equation}
\mathcal{L}(\theta)
=
\sum_{n=0}^{N-1}
\left\|
\operatorname{RK4}(f_\theta,h_n)-h(t_{n+1})
\right\|_2^2.
\label{eq:general_loss}
\end{equation}
Different training objectives can be adopted to optimize the learned
vector field. In this work, a different training strategy is employed
to improve long-term rollout performance, as described in the following
subsection.

In the following subsection, the standard Neural ODE framework is
specialized to the latent space learned by an autoencoder, resulting in
the reduced-order Neural ODE model used throughout this work.


\subsection{Autoencoder Reduced-Order Neural ODE}
\label{AE_reduced_NODE}
Having reviewed the standard Neural ODE framework, we now specialize it
to a reduced-order setting based on nonlinear latent representations.
Instead of evolving the original high-dimensional state, the Neural ODE
is constructed in the latent space learned by an autoencoder. The latent
variable therefore becomes the dynamical state, while the same RK4
integration procedure introduced in the previous subsection is retained
for temporal evolution.

Let
\[
\{u_0,u_1,\ldots,u_N\}
\]
be a trajectory dataset sampled at uniformly spaced time instances
\[
t_n=t_0+n\Delta t,
\qquad
n=0,\ldots,N.
\]
The dataset is divided chronologically into the training and testing intervals,
\[
\mathcal T_{\mathrm{train}}
=
\{t_0,\ldots,t_{m}\},
\qquad
\mathcal T_{\mathrm{test}}
=
\{t_{m+1},\ldots,t_N\}.
\]

Rather than approximating vector field directly as in the previous Subsection, we first construct a nonlinear low-dimensional representation using an autoencoder. 
The encoder maps the physical state to the latent space, while the decoder reconstructs the physical state from its latent representation.

\subsubsection{Autoencoder-Based Latent Representation}

We introduce
\begin{equation}
z(t)
=
E_w(u(t)),
\qquad
\hat u(t)
=
D_w(z(t)),
\label{eq:autoencoder}
\end{equation}
where
$E_w:\mathbb{R}^d\rightarrow\mathbb{R}^k$
and
$D_w:\mathbb{R}^k\rightarrow\mathbb{R}^d$
denote the encoder and decoder parameterized by $w$, respectively.
Both the encoder and decoder are implemented as fully connected feedforward neural networks,
\begin{align*}
E_w(x)
&=
(L_3\circ\sigma\circ L_2\circ\sigma\circ L_1)(x),\\
D_w(x)
&=
(L_6\circ\sigma\circ L_5\circ\sigma\circ L_4)(x),
\end{align*}
where each $L_i$ denotes an affine linear transformation and $\sigma$ represents the nonlinear activation function.

The autoencoder is trained by minimizing the reconstruction loss
\[
\mathcal L_{\mathrm{AE}}
=
\sum_{i=0}^{N_{\mathrm{train}}}
\|
u_i
-
D_w(E_w(u_i))
\|_2^2.
\]
After training, the encoder produces the latent trajectory
\[
z_n
=
E_w(u_n),
\qquad
n=0,\ldots,N,
\]
which serves as the reference latent trajectory throughout this work.

\subsubsection{Latent neural ODE}
We next propose latent neural ODE method which learns the continuous-time dynamics in the latent space.
Once the autoencoder has been trained, we obtain the encoded latent trajectory
$\{z_n\}_{n=0}^{N}$
We model the latent trajectories by the autonomous continuous-time system
\begin{equation}
\frac{dz}{dt}
=
g_\theta(z),
\label{eq:latent_ode}
\end{equation}
where $g_\theta$ denotes the latent vector field to be learned. 
Consistent with Section~\ref{sec:neural_ode}, the neural network parametrizes only the instantaneous latent derivative rather than directly predicting future latent states. Temporal evolution is recovered by $\operatorname{RK4}$ as in (\ref{eq:rk4_update}).

State is propagated over each observation interval using the RK4
integration introduced in Section~\ref{sec:neural_ode}. This naturally
defines the one-step latent flow operator
\begin{equation}
\Psi_\theta(z_n):=\operatorname{RK4}(g_\theta,z_n),
\label{eq:rk4}
\end{equation}
Repeated applications of $\Psi_\theta$ define the $k$-step flow operator
$\Psi_\theta^{(k)}$, where
\[
\Psi_\theta^{(k)}
=
\underbrace{\Psi_\theta\circ\Psi_\theta\circ\cdots\circ\Psi_\theta}_{k\ \text{times}},
\]
which is used to generate $k$-step-ahead predictions.
The latent vector field $g_\theta$ is learned using only the training
interval. The proposed training strategy will be described in the
following subsection. 

After training, future latent states are generated recursively from the
last training state by
\[
\Psi_\theta^{(k)}(z_m),
\qquad
k=1,2,\ldots.
\]
The corresponding test error is computed as
\[
\sum_{k=1}^{N-m}
\left\|
\Psi_\theta^{(k)}(z_m)-z_{m+k}
\right\|_2^2.
\]
The resulting rollout trajectory forms the basis of the error
propagation analysis developed in
Section~\ref{sec:framework}. Finally, the predicted physical state is
reconstructed by
$D_w\!\left(\Psi_\theta^{(k)}(z_m)\right).$
Algorithm~\ref{alg:ae_node} summarizes the complete training and prediction procedure adopted throughout this work. 

\begin{algorithm}[!ht]
\caption{Autoencoder Reduced-Order Neural ODE}
\label{alg:ae_node}
\begin{algorithmic}[1]
\REQUIRE
Trajectory dataset $\{u_n\}_{n=0}^{N}$,
training boundary $m$
\ENSURE
Learned latent vector field $g_\theta$ and predicted trajectory
$\{\hat u_n\}_{n=m+1}^{N}$

\STATE Train the encoder--decoder pair $(E_w,D_w)$ by minimizing
$\mathcal L_{\mathrm{AE}}$.

\STATE Encode the trajectory to obtain the latent states
\[
z_n=E_w(u_n),
\qquad n=0,\ldots,N.
\]

\STATE Learn the latent vector field $g_\theta$ using the training
interval.

\STATE Construct the one-step latent flow operator
\[
\Psi_\theta(z)=\operatorname{RK4}(g_\theta,z).
\]

\FOR{$k=1,\ldots,N-m$}
    \STATE Predict the latent state
    \[
    z_{m+k}
    =
    \Psi_\theta^{(k)}(z_m).
    \]
    \STATE Reconstruct the physical state
    \[
    \hat u_{m+k}
    =
    D_w(z_{m+k}).
    \]
\ENDFOR

\RETURN
$g_\theta$ and
$\{\hat u_n\}_{n=m+1}^{N}$.
\end{algorithmic}
\end{algorithm}


\subsection{Training Strategy}
\label{training_strategy}

The training proceeds in two stages: first, the autoencoder learns a latent representation by minimizing $\mathcal L_{\mathrm{AE}}$; subsequently, the latent dynamics are optimized through multi-step rollout prediction. This allows the learned flow map to account for error accumulation during recursive forecasting.

Valid rollout initialization points satisfy
$n \in \{0,\ldots,N_{\rm train}-k_{\max}\}$,
so that all rollout targets remain within the training interval. The latent dynamics are trained by minimizing

\begin{equation}
\mathcal L_{\rm dyn}
=
\frac{1}{|\mathcal I_{\rm train}'|}
\sum_{n\in\mathcal I_{\rm train}'}
\sum_{k=1}^{k_{\max}}
p(k)
\left\|
\Psi_\theta^{(k)}(z_n)
-
z_{n+k}
\right\|_2^2,
\label{eq:dyn_loss}
\end{equation}

where
\[
\sum_{k=1}^{k_{\max}} p(k)=1.
\]

The weighting function $p(k)$ determines the rollout exposure strategy. Across the rollout-training variants, the autoencoder architecture, latent Neural ODE, optimizer, learning rate, and training data are kept unchanged.

Three rollout exposure strategies are considered:

\begin{enumerate}
    \item \textbf{Base.}
    Training uses only the maximum rollout horizon,
    \[
    p(k)=
    \begin{cases}
    1, & k=k_{\max},\\
    0, & \text{otherwise}.
    \end{cases}
    \]

    \item \textbf{Mixed.}
    All rollout horizons are weighted uniformly,
    \[
    p(k)=\frac{1}{k_{\max}},
    \qquad
    k=1,\ldots,k_{\max}.
    \]

    \item \textbf{Mix2.}
    Longer rollout horizons receive progressively larger weights,
    \[
    p(k)=
    \frac{k^\alpha}
    {\sum_{j=1}^{k_{\max}} j^\alpha},
    \qquad
    \alpha=1.
    \]
\end{enumerate}

In addition to these rollout exposure strategies, we evaluate a
\textbf{Gradient Clipping} optimization baseline. This baseline does not alter the rollout objective in Eq.~(\ref{eq:dyn_loss}) or the weighting distribution $p(k)$. Instead, after backpropagation and before each optimizer update, the global $\ell_2$ norm of the parameter gradients is clipped according to

\[
\nabla_\theta \mathcal L_{\rm dyn}
\leftarrow
\min\left(
1,
\frac{\tau_{\rm clip}}
{\left\|\nabla_\theta \mathcal L_{\rm dyn}\right\|_2}
\right)
\nabla_\theta \mathcal L_{\rm dyn}.
\]

In all reported Gradient Clipping experiments, the clipping threshold is set to
\[
\tau_{\rm clip}=1.0.
\]

Gradient Clipping therefore modifies only the optimization dynamics, whereas Base, Mixed, and Mix2 modify the distribution of rollout horizons used in the training objective.

Unlike conventional Neural ODE studies that evaluate models primarily through rollout prediction errors, the present framework is designed to analyze error propagation mechanisms. The resulting rollout trajectories, latent deviations, and learned flow maps provide the basis for the path-integral decomposition and Jacobian transport diagnostics developed in the following section.


\section{Error Propagation and Diagnostic Framework}
\label{sec:framework}
The latent rollout generated from the last training state is
\[
\Psi_\theta^{(k)}(z_m),
\qquad
k=1,2,\ldots.
\]
Here, $z_{m+k}$ denotes the encoded latent state obtained from the
reference trajectory, whereas
$\Psi_\theta^{(k)}(z_m)$ represents the autonomous rollout prediction.

\subsection{Prediction Error Formulation}
\label{sec:rollout_prediction}
In this subsection, we introduce the error measures used throughout the
remainder of this work.
The latent prediction error measures the deviation between the
autonomous rollout and the encoded reference trajectory:
\[
\epsilon_k
=
\Psi_\theta^{(k)}(z_m)-z_{m+k}.
\]
To assess the quality of the learned latent representation,
\begin{equation}
E_{\mathrm{rep}}(t_n)
=
\frac{
\| D_w(E_w(u_n))-u_n \|_2
}{
\| u_n \|_2 + \eta_{\mathrm{reg}}
}, \quad n=0,\ldots, N,
\label{eq:rep_error}
\end{equation}
where $\eta_{\mathrm{reg}} = 10^{-8}$ ensures numerical stability.
Next, we define the physical-space prediction error over the test interval:
\begin{equation}
E_{\mathrm{pred}}(t_{m+k})
=
\frac{ \|  D_w(\Psi_\theta^{(k)}(z_m))  -u_{m+k} \|_2}{\| u_{m+k} \|_2 + \eta_{\mathrm{reg}}}, \quad k=1,\ldots N-m.
\label{eq:pred_error}
\end{equation}
Finally, we define the one-step local discrepancy
\begin{align}
\delta_k = \Psit(z_{m+k})-z_{m+k+1}.
\end{align}
Using the above definitions, it follows that
\begin{equation}
\epsilon_{k+1}
=
\Bigl[
\Psi_\theta^{(k+1)}(z_m)
-
\Psi_\theta(z_{m+k})
\Bigr]
+\delta_k.
\label{eq:latent_error_split}
\end{equation}
The first term represents the accumulated tracking error propagated
through the learned dynamics, whereas the second term corresponds to
the one-step local discrepancy introduced at the current prediction
step.
This decomposition separates transport from local discrepancy injection.
Fig.~\ref{fig:loss_concept} illustrates the relationship among the
reference latent trajectory, autonomous rollout, latent prediction
error, and one-step local discrepancy.

\begin{figure}[t]
    \centering
    \safeincludegraphics[width=0.98\textwidth]{Figure_concept.pdf}
    \caption{
    Schematic overview of the rollout process and the error quantities used throughout the paper.
    Starting from the boundary latent state $z_m$, the learned flow map recursively generates the rollout trajectory
    $\Psi_{\theta}^{(k)}(z_m)$,
    while the encoded latent trajectory provides the reference states $z_{m+k}$.
    The latent prediction error
    $\epsilon_n=\Psi_{\theta}^{(n-m)}(z_m)-z_n$
    measures the accumulated deviation from the reference trajectory, whereas the one-step local discrepancy
    $\delta_n=\Psi_{\theta}(z_n)-z_{n+1}$
    represents the local prediction error introduced at each rollout step.
    The bottom panel illustrates the decomposition of the tracking error into transport and local discrepancy injection, which motivates the theoretical framework developed in Section~3.
    }
    \label{fig:loss_concept}
\end{figure}

\subsection{Jacobian Transport Diagnostic}
\label{sec:jacobian_transport_diagnostic}

The decomposition introduced above separates the transported deviation from the one-step local discrepancy. To quantify how the learned rollout map transports a local perturbation over multiple steps, we evaluate products of its Jacobian matrices along the encoded reference trajectory.

\begin{definition}[Sequential Jacobian norm]
\label{def:transport_amplification}
The ordered product of sequential Jacobian matrices evaluated along the encoded reference trajectory is defined as
\[
P_{n,k} = \Jpsi(z_{n+k-1})\cdots\Jpsi(z_n), \qquad S_{n,k} = \|P_{n,k}\|_2,
\]
where
\[
\Jpsi(z):=D\Psi_\theta(z)
\]
denotes the Jacobian of the learned rollout map, and $P_{n,k}$ denotes the corresponding ordered Jacobian product over $k$ rollout steps. The matrix multiplication order places the Jacobian at the latest time on the left. The scalar $S_{n,k}$ is the \emph{multi-step spectral norm}, which quantifies the local linearized transport amplification over $k$ steps along the reference trajectory.
\end{definition}

The product $P_{n,k}$ is the Jacobian chain product of the repeated rollout map derivative evaluated along the encoded reference trajectory. It is a trajectory-based linearized diagnostic and does not capture the exact nonlinear error evolution along the perturbed rollout path. When $S_{n,k}$ remains of order one, the learned map does not produce strong multi-step linear amplification along that reference segment; we refer to this as the weak amplification regime, where repeated discrepancy injection may remain an important contributor to the overall error. When $S_{n,k}\gg1$, the map possesses the capacity to amplify deviations aligned with sensitive directions, although the realized amplification also depends on the alignment of the deviation with those directions. This case is referred to as the strong amplification regime.

Algorithm~\ref{alg:jacobian_computation} summarizes the computation of $P_{n,k}$ and $S_{n,k}$ over a prescribed horizon.

\begin{algorithm}[!ht]
\caption{Computation of the Multi-step Jacobian Norm}
\label{alg:jacobian_computation}
\begin{algorithmic}[1]
\REQUIRE Learned one-step rollout map $\Psit$, reference latent trajectory $\{z_0,\ldots,z_N\}$, rollout horizon $K$
\ENSURE Multi-step Jacobian norms $\{S_{n,1},\ldots,S_{n,K}\}$
\STATE Initialize $P_{n,0}\gets I$
\FOR{$k=1$ \textbf{to} $K$}
    \STATE Compute the local derivative matrix $J_{n+k-1}=\Jpsi(z_{n+k-1})$ using automatic differentiation.
    \STATE Update the ordered Jacobian product: $P_{n,k}=J_{n+k-1}P_{n,k-1}$.
    \STATE Compute the spectral norm $S_{n,k}=\|P_{n,k}\|_2$ via singular value decomposition.
\ENDFOR
\STATE \RETURN $\{S_{n,1},\ldots,S_{n,K}\}$.
\end{algorithmic}
\smallskip
\noindent
\textbf{Note:} For high-dimensional latent manifolds where full matrix storage becomes prohibitive, the spectral norm can be estimated efficiently using matrix-free Jacobian-vector products combined with power iteration or a Lanczos method.
\end{algorithm}

The sequence $\{S_{n,k}\}$ is used as a numerical diagnostic of linearized transport. The next subsection relates this diagnostic to the exact and linearized evolution of the latent rollout deviation.

\subsection{Error Propagation Analysis}
\label{sec:error_propagation_analysis}

We now analyze the evolution of the latent rollout deviation $\epsilon_n$ and the one-step local discrepancy $\delta_n$ introduced in Section~\ref{sec:rollout_prediction}. For consistency with the notation established in Section~2, the rollout prediction is expressed directly through repeated applications of the learned rollout map $\Psi_\theta$, without introducing additional rollout-state notation.

For the recursive analysis below, it is convenient to rewrite the rollout deviation using the absolute time index
\[
n=m+k.
\]
Accordingly, the latent rollout deviation is defined as
\[
\epsilon_n
=
\Psi_\theta^{(n-m)}(z_m)
-
z_n,
\qquad
n\ge m,
\]
where $z_n$ denotes the encoded reference trajectory and $\Psi_\theta^{(n-m)}(z_m)$ denotes the autonomous rollout prediction initialized from the last training latent state $z_m$. The analysis requires that both the encoded reference trajectory and the autonomous rollout prediction remain within a compact domain on which $\Psi_\theta$ possesses the required differentiability.

By the definition of the rollout deviation,
\[
\Psi_\theta^{(n-m)}(z_m)
=
z_n+\epsilon_n.
\]
Applying the learned rollout map once more gives
\[
\Psi_\theta^{(n+1-m)}(z_m)
=
\Psi_\theta(z_n+\epsilon_n).
\]
Using the definition of the one-step local discrepancy
\[
\delta_n
=
\Psi_\theta(z_n)-z_{n+1},
\]
the one-step error recursion becomes
\[
\epsilon_{n+1}
=
\Psi_\theta(z_n+\epsilon_n)
-
\Psi_\theta(z_n)
+
\delta_n,
\]
which expresses the next deviation as the transported current deviation together with the local discrepancy injected at the current reference transition. This identity serves as the starting point for the path-integral representation developed below.

\begin{assumption}[Regularity of the learned rollout map]
\label{ass:regularity}
The reference latent trajectory
\[
z_n,
\]
the autonomous rollout prediction
\[
\Psi_\theta^{(n-m)}(z_m),
\]
and the interpolation path
\[
z_n+s\epsilon_n,\qquad s\in[0,1],
\]
remain in a compact subset
\[
\Omega\subset\mathbb R^d
\]
over the evaluation horizon. The learned rollout map $\Psi_\theta$ is continuously differentiable on an open neighborhood of $\Omega$. Whenever a second-order Taylor expansion is used, $\Psi_\theta$ is assumed to be twice continuously differentiable on that neighborhood.
\end{assumption}

Under these conditions, the one-step local discrepancy and the transported deviation can be related exactly through the error recursion.

\begin{proposition}[Exact path-integral identity]
\label{prop:path_integral}
Under the conditions specified in Assumption~\ref{ass:regularity}, the exact evolution of the latent rollout deviation satisfies the relationship
\begin{equation}
\epsilon_{n+1}
=
\left( \int_0^1 \Jpsi(z_n+s\epsilon_n)\,ds \right) \epsilon_n + \delta_n,
\label{eq:error_transport}
\end{equation}
The first term gives an exact path-integral representation of the transport of the current latent deviation under the learned rollout map, whereas the second term represents the one-step local discrepancy injected at the current reference transition.
\end{proposition}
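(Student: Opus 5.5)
The plan is to start from the one-step error recursion derived just before Assumption~\ref{ass:regularity},
\[
\epsilon_{n+1}=\Psi_\theta(z_n+\epsilon_n)-\Psi_\theta(z_n)+\delta_n .
\]
This recursion is purely algebraic: it uses only the definitions of $\epsilon_n$ and $\delta_n$ and the semigroup property $\Psi_\theta^{(n+1-m)}=\Psi_\theta\circ\Psi_\theta^{(n-m)}$. It therefore needs no regularity. Everything then reduces to rewriting the difference $\Psi_\theta(z_n+\epsilon_n)-\Psi_\theta(z_n)$ as an averaged Jacobian applied to $\epsilon_n$. This is the integral form of the mean value theorem for vector-valued maps.

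Concretely, I will fix $n$ and define the auxiliary curve $\phi(s)=\Psi_\theta(z_n+s\epsilon_n)$ for $s\in[0,1]$. By Assumption~\ref{ass:regularity}, the segment $\{z_n+s\epsilon_n\}$ lies in the compact set $\Omega$, and $\Psi_\theta$ is $C^1$ on an open neighborhood of $\Omega$. So $\phi$ is continuously differentiable on $[0,1]$. By the chain rule, $\phi'(s)=\Jpsi(z_n+s\epsilon_n)\,\epsilon_n$.

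Next I apply the fundamental theorem of calculus componentwise to $\phi$:
\[
\Psi_\theta(z_n+\epsilon_n)-\Psi_\theta(z_n)=\phi(1)-\phi(0)=\int_0^1 \Jpsi(z_n+s\epsilon_n)\,\epsilon_n\,ds .
\]
Since $\epsilon_n$ does not depend on $s$, it can be pulled out of the integral, with the integral of the matrix-valued function understood entrywise. The entries $s\mapsto \Jpsi(z_n+s\epsilon_n)$ are continuous on $[0,1]$, hence Riemann integrable, so the matrix integral is well defined and linearity gives $\bigl(\int_0^1 \Jpsi(z_n+s\epsilon_n)\,ds\bigr)\epsilon_n$. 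Substituting this into the recursion yields \eqref{eq:error_transport}.

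The only delicate point is justifying that $\phi$ is $C^1$ on the whole interval. This requires the entire segment, not merely its endpoints, to stay inside the region where $\Psi_\theta$ is differentiable. The interpolation-path clause of Assumption~\ref{ass:regularity} is exactly what guarantees this, so no further argument is needed. Note also that the identity is exact: no Taylor remainder arises, and the $C^2$ hypothesis is not used here. I would remark that $C^2$ becomes relevant only when one later replaces the averaged Jacobian by $\Jpsi(z_n)$ and bounds the error of that replacement.
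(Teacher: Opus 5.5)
Your proposal is correct and follows the paper's proof essentially verbatim: define $\phi(s)=\Psi_\theta(z_n+s\epsilon_n)$, compute $\phi'(s)$ by the chain rule, apply the fundamental theorem of calculus, and substitute into the one-step recursion $\epsilon_{n+1}=\Psi_\theta(z_n+\epsilon_n)-\Psi_\theta(z_n)+\delta_n$. Your added details are sound refinements of the same argument and not a different route: the interpolation-path clause of the assumption justifies differentiability along the whole segment, the matrix integral is taken entrywise, and the $C^2$ hypothesis is not needed here.
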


\begin{proof}
Define $g_n(s) := \Psit(z_n+s\epsilon_n)$ for $s\in[0,1]$, which parameterizes the line segment connecting the reference latent trajectory and the corresponding autonomous rollout prediction. Then
\[
g_n(1)-g_n(0)
=
\Psit(z_n+\epsilon_n)-\Psit(z_n).
\]
The fundamental theorem of calculus gives $g_n(1)-g_n(0)=\int_0^1 g_n'(s)\,ds$. Differentiating with respect to $s$ yields $g_n'(s)=\Jpsi(z_n+s\epsilon_n)\epsilon_n$. Combining these relations with the one-step recursion
\[
\epsilon_{n+1}
=
\Psit(z_n+\epsilon_n)-\Psit(z_n)+\delta_n
\]
completes the proof.
\end{proof}

For a conservative norm bound on the one-step deviation, we bound the path-integral operator by a uniform Jacobian norm bound over the domain.

\begin{corollary}[Worst-case single-step bound]
\label{cor:norm_bound}
Under Assumption~\ref{ass:regularity}, the magnitude of the latent deviation is bounded by
\begin{equation}
\|\epsilon_{n+1}\| \le L_\Psi \, \|\epsilon_n\| + \|\delta_n\|,
\label{eq:norm_bound}
\end{equation}
where $L_\Psi := \sup_{z\in\Omega}\|\Jpsi(z)\|$ provides a Lipschitz bound for $\Psi_\theta$ on the domain $\Omega$.
\end{corollary}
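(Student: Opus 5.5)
The plan is to start from the exact identity in Proposition~\ref{prop:path_integral}, take norms, and bound the averaged Jacobian uniformly over $\Omega$. By the triangle inequality applied to \eqref{eq:error_transport},
\[
\|\epsilon_{n+1}\| \le \Bigl\| \int_0^1 \Jpsi(z_n+s\epsilon_n)\,ds \Bigr\|\,\|\epsilon_n\| + \|\delta_n\|,
\]
where the operator norm is the one induced by the vector norm in use. The argument then reduces to showing that the path-averaged Jacobian has norm at most $L_\Psi$.

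For that step I would use that $s\mapsto \Jpsi(z_n+s\epsilon_n)$ is continuous on $[0,1]$: $\Psi_\theta$ is $C^1$ on a neighborhood of $\Omega$, and the segment lies in $\Omega$ by Assumption~\ref{ass:regularity}. Hence the matrix-valued integral is a Riemann integral, and a norm of an integral is bounded by the integral of the norm. This follows by applying the triangle inequality to Riemann sums and passing to the limit, or from convexity of the norm. Therefore
\[
\Bigl\| \int_0^1 \Jpsi(z_n+s\epsilon_n)\,ds \Bigr\| \le \int_0^1 \|\Jpsi(z_n+s\epsilon_n)\|\,ds \le \sup_{z\in\Omega}\|\Jpsi(z)\| = L_\Psi .
\]
Here the second inequality again uses that every point $z_n+s\epsilon_n$ lies in $\Omega$.

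Two remaining points need care. First, $L_\Psi$ must be finite. This holds because $z\mapsto\|\Jpsi(z)\|$ is continuous on the compact set $\Omega$, so the supremum is attained. Second, $L_\Psi$ should be justified as a Lipschitz constant for $\Psi_\theta$ on $\Omega$. Writing $\Psi_\theta(x)-\Psi_\theta(y)=\int_0^1 \Jpsi(y+s(x-y))(x-y)\,ds$ gives this, but only for pairs $x,y$ whose connecting segment lies in $\Omega$. Such segments exist for the pairs used here, by the assumption on the interpolation path, and for all pairs if $\Omega$ is convex. I would state the Lipschitz remark with that caveat, since the inequality \eqref{eq:norm_bound} itself needs only the segment $z_n+s\epsilon_n$.

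None of these steps is deep. The main point to handle carefully is the norm-of-integral inequality for matrix-valued integrands and the role of the segment hypothesis in Assumption~\ref{ass:regularity}.
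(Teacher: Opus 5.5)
Your argument is correct and follows the route the paper indicates. The paper gives no separate proof, only the remark that one bounds the path-integral operator in Eq.~\eqref{eq:error_transport} by the uniform Jacobian bound over $\Omega$, and your steps (triangle inequality, norm of the integral bounded by the integral of the norm, and the segment hypothesis of Assumption~\ref{ass:regularity}) fill this in. Your caveat is a valid sharpening of the paper's claim and is not needed for \eqref{eq:norm_bound} itself: $L_\Psi$ is a genuine Lipschitz constant for $\Psi_\theta$ on $\Omega$ only along segments contained in $\Omega$, for instance when $\Omega$ is convex.
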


Corollary~\ref{cor:norm_bound} provides a bound for one rollout step. Long-horizon analysis requires a recursive expansion that separates the transported initial deviation, the transported historical discrepancies, and the nonlinear remainder.

\begin{proposition}[Linearized multi-step propagation with remainder]
\label{prop:multistep_linear}
Suppose Assumption~\ref{ass:regularity} holds and let the learned map $\Psi_\theta$ be twice continuously differentiable. The latent deviation accumulated over a horizon of $k$ steps admits the decomposition
\begin{equation}
\epsilon_{n+k}
=
P_{n,k}\,\epsilon_n + \sum_{j=0}^{k-1} P_{n+j+1,\,k-j-1}\,\delta_{n+j} + R_{n,k},
\label{eq:multistep_linear}
\end{equation}
where the linear operators are defined by the ordered Jacobian products
\[
P_{n,k}
=
\Jpsi(z_{n+k-1})\cdots\Jpsi(z_n),
\qquad
P_{n+j+1,\,k-j-1}
=
\Jpsi(z_{n+k-1})\cdots\Jpsi(z_{n+j+1}),
\]
with the convention $P_{q,0}=I$ for any $q$. The nonlinear remainder $R_{n,k}$ collects the Taylor remainders generated at each step after their transport through the subsequent Jacobian factors, and satisfies $\|R_{n,k}\| \le C_k \sum_{\ell=0}^{k-1} \|\epsilon_{n+\ell}\|^2$, where the constant $C_k$ depends on the derivative bounds of $\Psi_\theta$ and on the rollout horizon $k$. This decomposition is a local linearization around the encoded reference trajectory; when the deviation becomes large or the Jacobian products exhibit strong amplification, the nonlinear remainder may no longer be negligible relative to the linearized terms.
\end{proposition}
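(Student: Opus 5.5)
We prove the decomposition by induction on the horizon $k$, starting from the one-step error recursion $\epsilon_{n+1}=\Psit(z_n+\epsilon_n)-\Psit(z_n)+\delta_n$ derived before Assumption~\ref{ass:regularity}.

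\textbf{Step 1 (one-step linearization with remainder).} Under Assumption~\ref{ass:regularity} with $\Psi_\theta\in C^2$, we apply Taylor's theorem with integral remainder along the segment $z_n+s\epsilon_n$, which by the assumption lies in $\Omega$:
\[
\Psit(z_n+\epsilon_n)-\Psit(z_n)=\Jpsi(z_n)\epsilon_n+r_n,\qquad r_n=\int_0^1(1-s)\,D^2\Psi_\theta(z_n+s\epsilon_n)[\epsilon_n,\epsilon_n]\,ds .
\]
Equivalently, we subtract $\Jpsi(z_n)\epsilon_n$ from the path-integral identity of Proposition~\ref{prop:path_integral} and bound the difference of Jacobians. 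Setting $M_2:=\sup_{z\in\Omega}\|D^2\Psi_\theta(z)\|$, which is finite because $\Omega$ is compact and $\Psi_\theta$ is $C^2$ on a neighborhood of $\Omega$, we obtain $\|r_n\|\le \tfrac12 M_2\|\epsilon_n\|^2$. Hence
\[
\epsilon_{n+1}=\Jpsi(z_n)\epsilon_n+\delta_n+r_n .
\]

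\textbf{Step 2 (induction on $k$).} The case $k=0$ is trivial, since $P_{n,0}=I$, the sum is empty, and $R_{n,0}=0$. We define
\[
R_{n,k}:=\sum_{j=0}^{k-1}P_{n+j+1,\,k-j-1}\,r_{n+j}.
\]
Assume the formula holds at horizon $k$, and apply Step 1 at index $n+k$:
\[
\epsilon_{n+k+1}=\Jpsi(z_{n+k})\epsilon_{n+k}+\delta_{n+k}+r_{n+k}.
\]
Substitute the inductive hypothesis for $\epsilon_{n+k}$. The factorization $\Jpsi(z_{n+k})P_{q,\,n+k-q}=P_{q,\,n+k+1-q}$ follows from the ordering convention in Definition~\ref{def:transport_amplification}. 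With it, the first term becomes $P_{n,k+1}\epsilon_n$. Each transported discrepancy $P_{n+j+1,k-j-1}\delta_{n+j}$ becomes $P_{n+j+1,k-j}\delta_{n+j}$, and the new discrepancy $\delta_{n+k}$ appears with $P_{n+k+1,0}=I$. The remainders combine in the same way. This is pure bookkeeping, and the main care needed is keeping the index conventions consistent.

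\textbf{Step 3 (remainder bound).} Let $L_\Psi$ be as in Corollary~\ref{cor:norm_bound}. Submultiplicativity gives $\|P_{n+j+1,k-j-1}\|\le L_\Psi^{\,k-j-1}$. Therefore
\[
\|R_{n,k}\|\le \tfrac12 M_2\max(1,L_\Psi)^{k-1}\sum_{\ell=0}^{k-1}\|\epsilon_{n+\ell}\|^2,
\]
so $C_k=\tfrac12 M_2\max(1,L_\Psi^{k-1})$ depends only on derivative bounds and on $k$, as claimed.

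The only genuinely delicate point is Step 1. There the uniform bound $M_2$ requires the whole interpolation segment to stay in $\Omega$, which is exactly what Assumption~\ref{ass:regularity} supplies. The closing remark of the proposition is interpretive rather than something to prove. It follows from the fact that $C_k$ grows like $L_\Psi^{k}$ and that the remainder is quadratic in $\|\epsilon\|$, so the linear terms dominate only while $\|\epsilon\|$ stays small relative to $1/C_k$.
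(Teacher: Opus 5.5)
Your proposal is correct and follows the same route as the paper: a one-step Taylor expansion $\epsilon_{n+1}=J_{\Psi}(z_n)\epsilon_n+\delta_n+r_n$ is unrolled over $k$ steps, and $R_{n,k}$ collects the transported remainders $P_{n+j+1,\,k-j-1}r_{n+j}$. Your version makes the paper's sketch rigorous with the integral-form remainder bound $\|r_n\|\le\tfrac12 M_2\|\epsilon_n\|^2$, an explicit induction using $J_{\Psi}(z_{n+k})P_{q,\,n+k-q}=P_{q,\,n+k+1-q}$, and the concrete constant $C_k=\tfrac12 M_2\max(1,L_\Psi)^{k-1}$.
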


\begin{proof}
Performing a one-step Taylor expansion of the learned map about the reference state gives
\[
\Psit(z_n+\epsilon_n) = \Psit(z_n) + \Jpsi(z_n)\epsilon_n + r_n,
\]
where the remainder satisfies $\|r_n\|\le C\|\epsilon_n\|^2$. Substituting this expansion into the discrete error evolution equation yields
\[
\epsilon_{n+1} = \Jpsi(z_n)\epsilon_n + \delta_n + r_n.
\]
Applying this relation recursively over $k$ steps produces the multi-step expression in Eq.~\eqref{eq:multistep_linear}. The initial deviation $\epsilon_n$ is transported by the full chain of Jacobian products. Each one-step local discrepancy $\delta_{n+j}$ is transported by the Jacobian product over the remaining $k-j-1$ steps. The Taylor remainder generated at each step is transported by the Jacobian factors associated with the subsequent steps. Collecting these transported remainder terms gives $R_{n,k}$ and the stated horizon-dependent bound.
\end{proof}

\paragraph{Significance of the Linearized Diagnostic.}
The sequential Jacobian product introduced in Definition~\ref{def:transport_amplification} serves as a diagnostic for mapping local linearized transport sensitivities. Grouping the historical discrepancy terms defines the cumulative transported discrepancy
\[
\Xi_{n,k} := \sum_{j=0}^{k-1} P_{n+j+1,\,k-j-1}\,\delta_{n+j},
\]
which represents the cumulative contribution of transported one-step local discrepancies under the linearized model. This yields the compact decomposition
\begin{equation}
\epsilon_{n+k} = P_{n,k}\,\epsilon_n + \Xi_{n,k} + R_{n,k}.
\label{eq:compact_decomp}
\end{equation}
Here $P_{n,k}\epsilon_n$ is the transported initial deviation, $\Xi_{n,k}$ is the cumulative transported discrepancy, and $R_{n,k}$ is the nonlinear correction. When $S_{n,k}$ is of order one, repeated discrepancy injection may still accumulate; when $S_{n,k}\gg1$, the learned rollout map has the capacity to strongly amplify deviations aligned with sensitive directions. The realized error at a given horizon reflects the combined effect of the transported initial deviation, the accumulated discrepancy, and nonlinear corrections beyond the local linearization.

The linearized decomposition is local. A separate conservative envelope follows by applying the single-step norm bound recursively.

\begin{proposition}[Global rollout deviation bound]
\label{prop:global_bound}
Assuming the learned model satisfies uniform bounds such that $\|\delta_n\| \le \bar\delta$ and $\|\Jpsi(z)\| \le L_\Psi$ hold across the domain $\Omega$, the latent rollout deviation is bounded by
\begin{equation}
\|\epsilon_{n+k}\| \le L_\Psi^k \,\|\epsilon_n\| + \bar\delta \sum_{j=0}^{k-1} L_\Psi^{\,j}.
\label{eq:global_bound_ineq}
\end{equation}
For $L_\Psi \neq 1$, this geometric series collapses to the closed expression
\begin{equation}
\|\epsilon_{n+k}\| \le \bar\delta\,\frac{L_\Psi^{\,k} - 1}{L_\Psi - 1} + L_\Psi^k \,\|\epsilon_n\|.
\label{eq:global_bound_closed}
\end{equation}
For $L_\Psi = 1$, the bound reduces to
\begin{equation}
\|\epsilon_{n+k}\| \le \|\epsilon_n\| + k\bar\delta.
\label{eq:global_bound_linear}
\end{equation}
\end{proposition}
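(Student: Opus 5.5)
The plan is to unroll the worst-case single-step bound of Corollary~\ref{cor:norm_bound} by induction on the horizon $k$. Under Assumption~\ref{ass:regularity}, every rollout state $z_{n+\ell}+\epsilon_{n+\ell}$ and every interpolation segment $z_{n+\ell}+s\epsilon_{n+\ell}$ lies in $\Omega$. Hence the exact path-integral identity of Proposition~\ref{prop:path_integral} applies at every step. Taking norms and using $\|\int_0^1 \Jpsi(z_{n+\ell}+s\epsilon_{n+\ell})\,ds\|\le \sup_{s}\|\Jpsi(z_{n+\ell}+s\epsilon_{n+\ell})\|\le L_\Psi$ gives
\[
\|\epsilon_{n+\ell+1}\|\le L_\Psi\|\epsilon_{n+\ell}\|+\|\delta_{n+\ell}\|\le L_\Psi\|\epsilon_{n+\ell}\|+\bar\delta
\]
for every $\ell\ge 0$ in the evaluation horizon. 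Here the uniform discrepancy bound $\|\delta_{n+\ell}\|\le\bar\delta$ is inserted directly.

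The induction itself is straightforward. The base case $k=0$ is the trivial equality $\|\epsilon_n\|\le\|\epsilon_n\|$, with the empty sum equal to zero. For the inductive step, assume \eqref{eq:global_bound_ineq} holds at horizon $k$. Substituting it into the one-step inequality with $\ell=k$ gives
\[
\|\epsilon_{n+k+1}\|\le L_\Psi\Bigl(L_\Psi^k\|\epsilon_n\|+\bar\delta\sum_{j=0}^{k-1}L_\Psi^{j}\Bigr)+\bar\delta
= L_\Psi^{k+1}\|\epsilon_n\|+\bar\delta\sum_{j=0}^{k}L_\Psi^{j},
\]
after reindexing $j\mapsto j+1$ and absorbing the extra $\bar\delta$ as the $j=0$ term. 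This step uses only that $L_\Psi\ge 0$, so multiplying an inequality by $L_\Psi$ preserves its direction.

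The closed forms then follow from summing the geometric series. For $L_\Psi\neq1$, $\sum_{j=0}^{k-1}L_\Psi^j=(L_\Psi^k-1)/(L_\Psi-1)$, which gives \eqref{eq:global_bound_closed}. This expression is valid whether $L_\Psi$ lies in $[0,1)$ or exceeds $1$, since numerator and denominator change sign together. For $L_\Psi=1$, every term of the sum equals $1$ and $L_\Psi^k=1$, which gives \eqref{eq:global_bound_linear}.

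The only delicate point is not algebraic. It is justifying that the constant $L_\Psi$ controls the averaged Jacobian at every step, which requires that the interpolation segments remain in $\Omega$ throughout the horizon. I would state explicitly that this is exactly what Assumption~\ref{ass:regularity} supplies. Because the bound is taken over $\Omega$ rather than only along the rollout path, it is a conservative envelope; unlike Proposition~\ref{prop:multistep_linear}, it needs no $C^2$ regularity and has no remainder term.
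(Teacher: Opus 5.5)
Your proposal is correct and follows the paper's own argument: the paper obtains this envelope by applying the single-step bound of Corollary~\ref{cor:norm_bound} recursively, and your induction on $k$ makes that recursion explicit. The geometric-series evaluation for the cases $L_\Psi\neq 1$ and $L_\Psi=1$ also matches the statement.
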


The bound based on $L_\Psi$ provides a conservative domain-wide envelope, whereas $P_{n,k}$ and $S_{n,k}$ retain the trajectory-dependent structure of the learned rollout map. The numerical experiments in Section~4 examine these quantities together with the rollout error and one-step local discrepancy to explain the different propagation behaviors observed among the benchmark systems and training strategies.

\section{Results}
\label{sec:results}

This section addresses whether long-term rollout error primarily arises from local discrepancy injection or from transport amplification. Section~\ref{sec:framework} developed a theoretical framework that distinguishes these two mechanisms: injection-dominated propagation occurs when the latent Jacobian $S_{n,k}$ remains of order one, whereas amplification-dominated propagation arises when $S_{n,k}$ becomes substantially larger than one. Across physical systems with contrasting stability properties, the experiments that follow test this prediction.

Burgers equations (1D and 2D) serve as representative cases of mild transport sensitivity, where the latent dynamics are expected to remain in a regime of subdued error growth. Gray--Scott reaction--diffusion, by contrast, serves as a strong-amplification case, where the latent Jacobian is predicted to enter a regime of exponential error growth. Whether the theoretical classification in Section~\ref{sec:framework} correctly separates stable from unstable rollout behavior is thus tested in a controlled manner.

Implementation of the framework uses PyTorch v1.12.1 with CUDA 11.3 acceleration, in a modular design that separates autoencoder, latent ODE dynamics, training pipelines, and diagnostics. Network architectures and optimization protocols remain fixed throughout, with only input and output dimensions adjusted for each physical system. All experiments were executed on a computing cluster with NVIDIA V100 GPUs (32GB memory), 20-core Intel Xeon CPUs, and 256GB RAM per node; model definitions, training routines, rollout scripts, and Jacobian diagnostic tools are provided in the codebase for full reproducibility.

\begin{table}[!t]
\centering
\caption{
Training hyperparameters and implementation settings.
}
\label{tab:implementation}
\begin{tabular}{llll}
\toprule
Parameter & 1D Burgers & 2D Burgers & Gray--Scott \\
\midrule
Latent dimension & 16 & 16 & 16 \\
Hidden width & 128 & 128 & 128 \\
Number of layers & 3 & 3 & 3 \\
Activation & ReLU & ReLU & ReLU \\
Optimizer & Adam & Adam & Adam \\
Learning rate & $10^{-3}$ & $10^{-3}$ & $10^{-3}$ \\
Batch size & 64 & 64 & 64 \\
Training epochs & 2000 & 2000 & 2000 \\
\bottomrule
\end{tabular}
\end{table}

\subsection{Burgers Systems: Weak Amplification Regime}
\label{sec:burgers_systems}

Throughout the prediction horizon, the 1D Burgers rollout remains stable, with predicted solutions staying visually close to the ground truth (Fig.~\ref{fig:1d_solution_snapshots}). Relative $L^2$ errors remain below 2\%, and the local discrepancy injection $\|\delta_n\|$ is approximately 0.01. The bounded growth of $\|\epsilon_n\|$ across seeds is consistent with propagation dominated by local discrepancy injection rather than by transport-driven escalation (Fig.~\ref{fig:1d_mechanism}).

Consistent with the transport sensitivity diagnostic $S_{n,k} \approx 1$, log-slopes of $\|\epsilon_n\|$ cluster near zero across all models, and test-horizon relative $L^2$ errors remain near 1.5\% for both predictive models (Table~\ref{tab:1d_combined}).

\begin{figure}[t]
\centering
\safeincludegraphics[width=\textwidth]{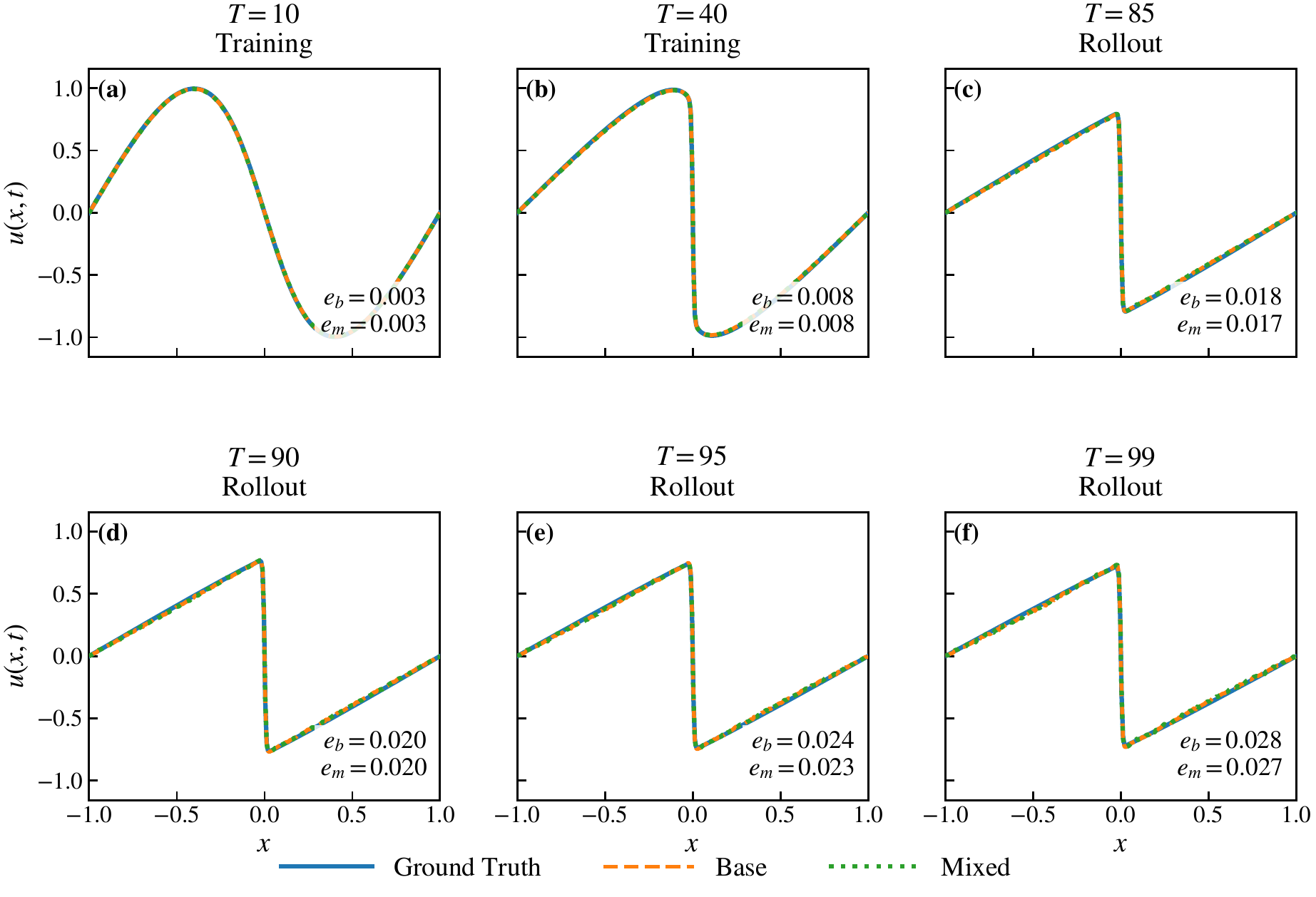}
\caption{
Representative 1D Burgers solution snapshots under the boundary-initialized evaluation protocol.
Ground-truth, Base, and Mixed solutions are overlaid for visual comparison.
Throughout rollout the curves remain visually close, reflecting the mild transport sensitivity of the Burgers system.
The reported relative $\ell_2$ errors indicate the gradual accumulation of small prediction discrepancies.
}
\label{fig:1d_solution_snapshots}
\end{figure}

In this regime of limited transport sensitivity, the two rollout strategies produce nearly identical behavior. Their empirical log-slopes are statistically indistinguishable, and modifying rollout exposure has negligible impact on the final error (Table~\ref{tab:1d_combined}).

\begin{figure}[t]
\centering
\safeincludegraphics[width=\textwidth]{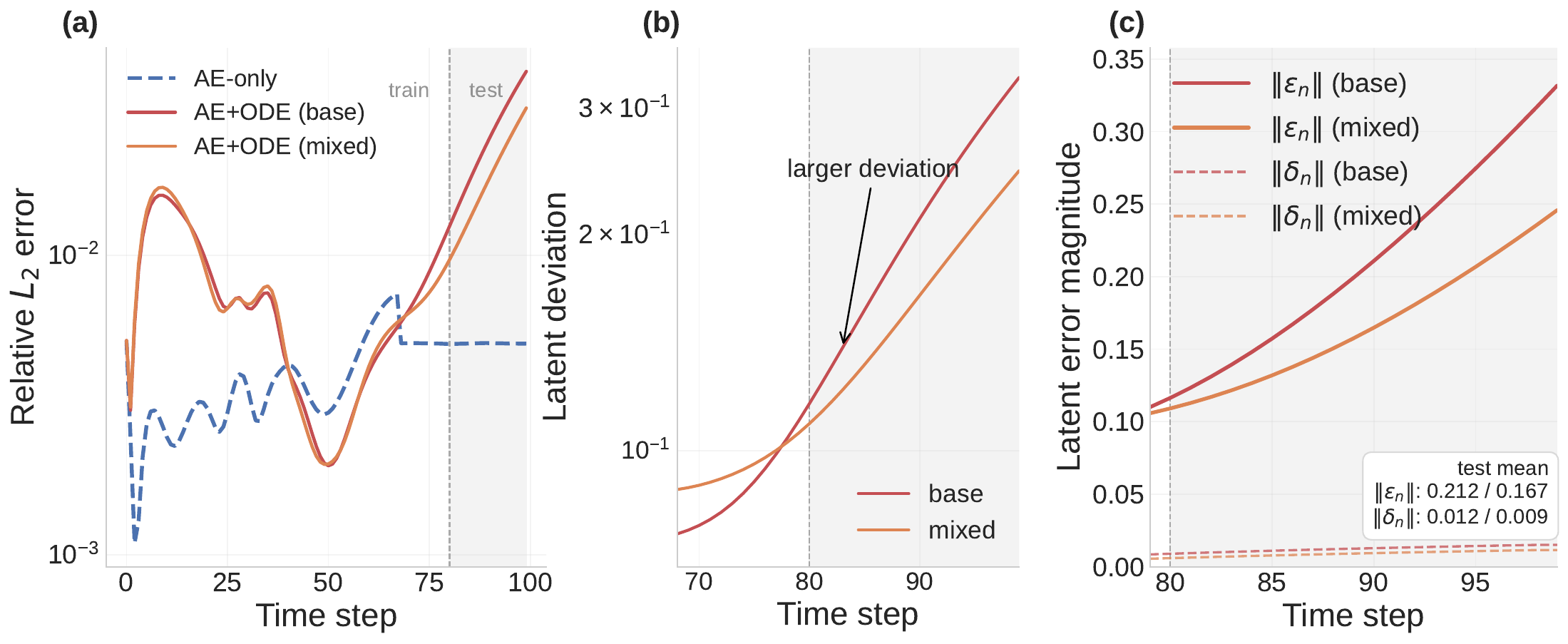}
\caption{
1D Burgers mechanism diagnostics showing error evolution, local discrepancy injection, and latent rollout deviation growth across seeds.
Bounded growth of $\|\epsilon_n\|$ indicates that error propagation is dominated by local discrepancy injection rather than by transport-driven escalation.
}
\label{fig:1d_mechanism}
\end{figure}

\renewcommand{\arraystretch}{1.1}

\begin{table}[t]
\centering
\caption{
1D Burgers propagation diagnostics (10 seeds): 
Mean $\pm$ standard deviation for physical-space relative $L^2$ error (RelL2),
local discrepancy injection ($\|\delta_n\|$), and empirical growth rate of $\|\epsilon_n\|$ (log-slope).
}
\label{tab:1d_combined}
\small
\setlength{\tabcolsep}{5pt}
\begin{tabular*}{\linewidth}{@{\extracolsep{\fill}} l c c c @{}}
\toprule
Method & Test RelL2 (\%) & Mean $\|\delta_n\|$ & Log-slope of $\|\epsilon_n\|$ \\
\midrule
\multicolumn{4}{l}{\textit{Representation (no rollout)}} \\
AE-only & $1.890 \pm 0.714$ & -- & -- \\
\midrule
\multicolumn{4}{l}{\textit{Predictive dynamics (with rollout)}} \\
AE+ODE (base)  & $1.549 \pm 0.487$ & 0.012 & $0.054 \pm 0.029$ \\
AE+ODE (mixed) & $1.419 \pm 0.377$ & 0.009 & $0.055 \pm 0.031$ \\
\bottomrule
\end{tabular*}
\end{table}

Close agreement with the encoded reference trajectory is also observed for the 2D Burgers velocity fields, with spatial relative errors staying below 1.2\% across all three exposure strategies (Fig.~\ref{fig:2d_solution_snapshots}). Localized front structures are preserved throughout the rollout horizon.

\begin{figure}[t]
\centering
\safeincludegraphics[width=\textwidth]{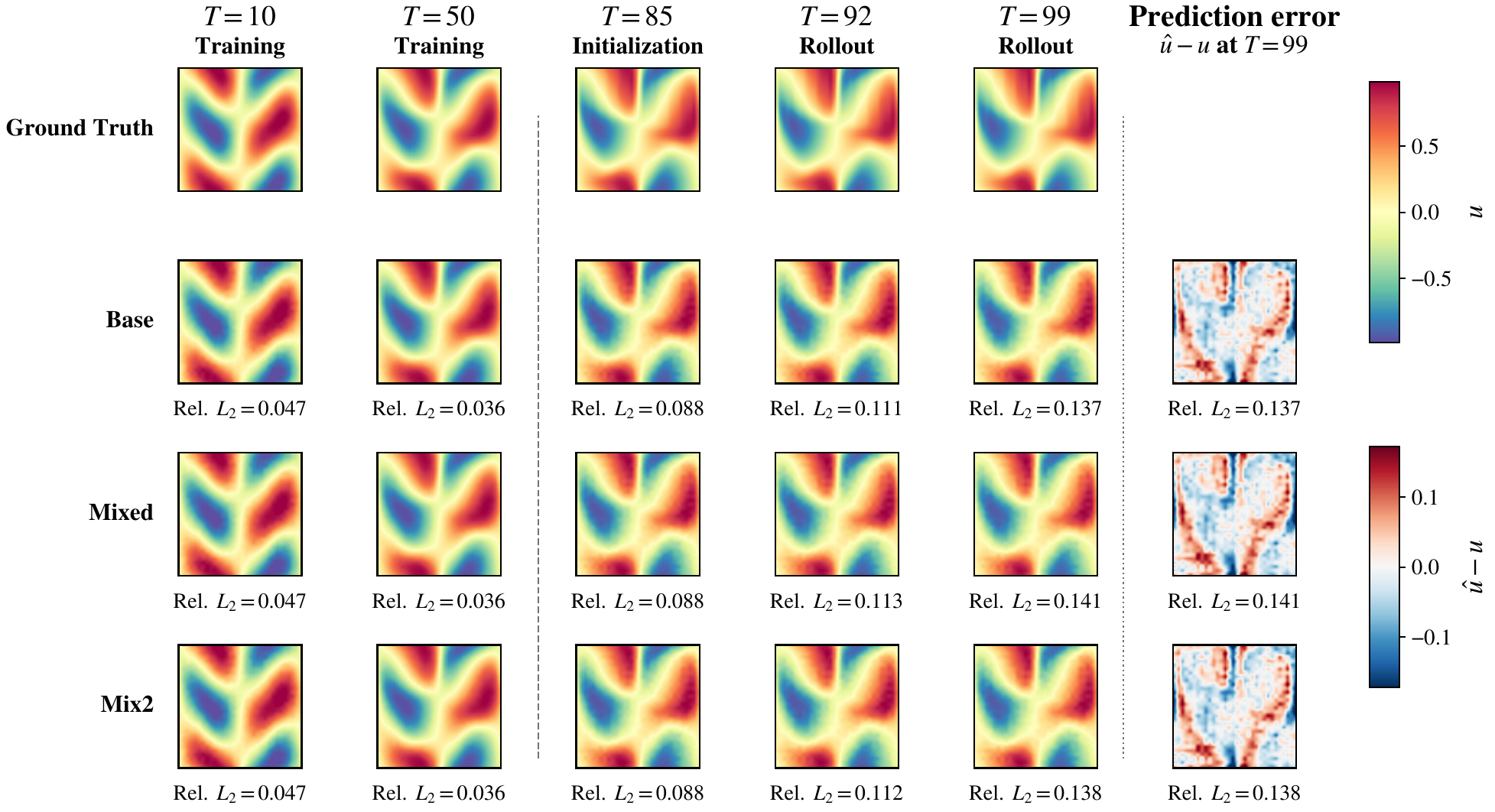}
\caption{
Representative 2D Burgers solution snapshots under the boundary-initialized evaluation protocol.
The first two columns correspond to autoencoder reconstructions within the training interval, the third column denotes the latent state used for boundary initialization, and the following columns show latent ODE rollout predictions in the test interval.
Rows compare the ground truth with the Base, Mixed, and Mix2 models.
The rightmost column shows the signed prediction error $\hat{u}-u$ at the final rollout time.
Numbers below each predicted snapshot indicate the relative $\ell_2$ error.
}
\label{fig:2d_solution_snapshots}
\end{figure}

Across all exposure variants, rollout errors remain within 1\% of the AE-only baseline (Fig.~\ref{fig:2d_main}; Table~\ref{tab:2d_perf}). Temporal correlation $H_\tau$ and log-slope values exhibit minimal variation across loss formulations, as expected under the constrained transport sensitivity of this regime.

\begin{figure}[t]
\centering
\safeincludegraphics[width=\linewidth]{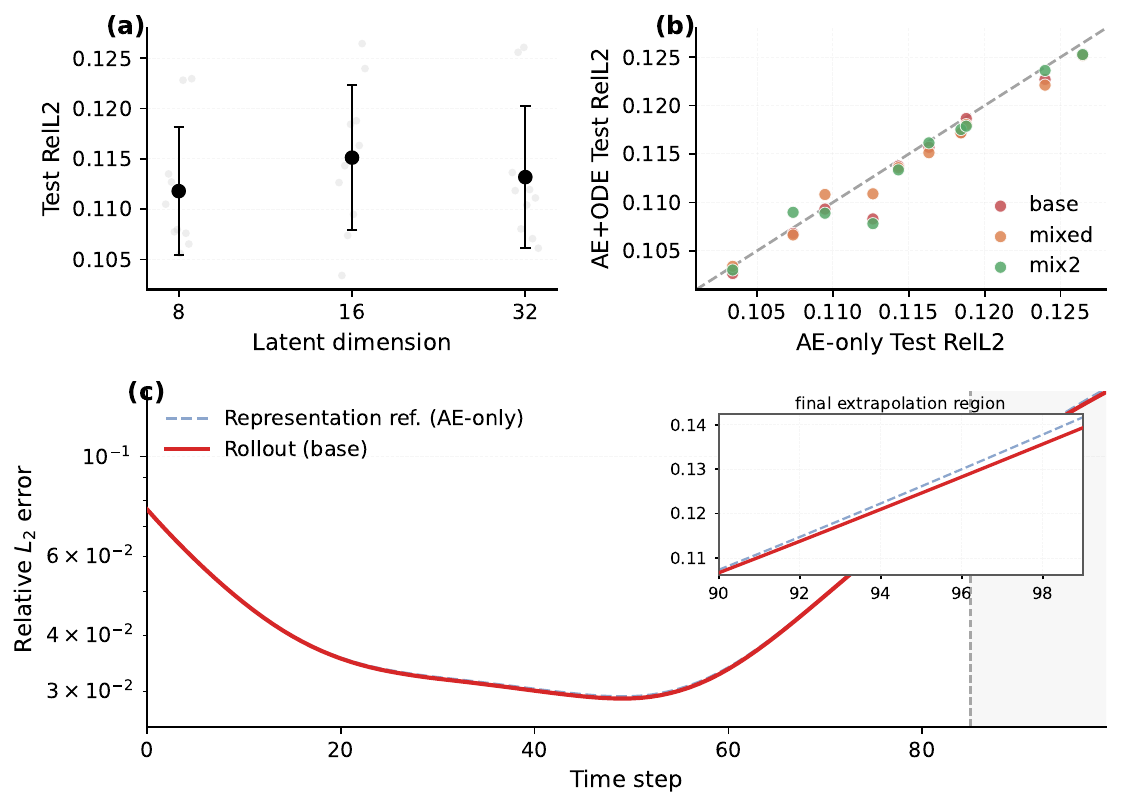}
\caption{
2D Burgers rollout behavior showing consistent performance across exposure variants, with errors remaining within approximately $1\%$ of the AE-only baseline.
}
\label{fig:2d_main}
\end{figure}

\begin{table}[t]
\centering
\caption{
2D Burgers propagation diagnostics (10 seeds): 
Comparison of representation error (AE-only) with rollout behavior (AE+ODE).
}
\label{tab:2d_perf}
\begin{tabular}{lcccc} 
\toprule
Method & Train RelL2 & Test RelL2 & $H_\tau$ & Log-slope of $\|\epsilon_n\|$ \\
\midrule
AE-only & 0.0432 $\pm$ 0.0038 & 0.1151 $\pm$ 0.0068 & 87.6 $\pm$ 1.9 & -- \\
\midrule
AE+ODE (base)  & 0.0431 $\pm$ 0.0039 & 0.1140 $\pm$ 0.0069 & 78.7 $\pm$ 26.6 & 0.0316 $\pm$ 0.0020 \\
AE+ODE (mixed) & 0.0435 $\pm$ 0.0038 & 0.1143 $\pm$ 0.0064 & 78.6 $\pm$ 26.6 & 0.0316 $\pm$ 0.0020 \\
AE+ODE (mix2)  & 0.0437 $\pm$ 0.0041 & 0.1142 $\pm$ 0.0068 & 78.5 $\pm$ 26.6 & 0.0315 $\pm$ 0.0020 \\
\bottomrule
\end{tabular} 
\end{table}

Local discrepancy injection and latent rollout deviation remain similar across the three exposure strategies. Nearly identical mean $\|\delta_n\|$ and mean $\|\epsilon_n\|$ values for Base, Mixed, and Mix2 appear in the Jacobian statistics, reported together with the Gray--Scott results in Table~\ref{tab:jacobian_summary}. Changing rollout exposure therefore does not materially alter transport sensitivity in this regime of subdued amplification.

During extrapolation, linear dimensionality reduction methods fail to maintain predictive accuracy (Table~\ref{tab:2d_representation}). Increasing the latent dimension from 16 to 32 does not improve POD/DMD performance, whereas the nonlinear autoencoder representation achieves substantially lower error at dimension 16.

\begin{table}[t]
\centering
\caption{
2D Burgers representation comparison across latent dimensions:
Physical-space relative $L^2$ error (RelL2), temporal correlation ($H_\tau$), and empirical growth rate (log-slope).
}
\label{tab:2d_representation}
\begin{tabular}{c|ccc|ccc}
\toprule
& \multicolumn{3}{c|}{AE (nonlinear representation)}
& \multicolumn{3}{c}{POD/DMD (linear projection)} \\
\cmidrule(r){2-4} \cmidrule(l){5-7}
Latent dim & Test RelL2 & $H_\tau$ & Log-slope
            & Test RelL2 & $H_\tau$ & Log-slope \\
\midrule
8  & 0.1127 & 88 & 0.0331 & 3.29340 & 34 & -0.01750 \\
16 & 0.1101 & 89 & 0.0336 & 2.43420 & 31 & -0.05700 \\
32 & 0.1170 & 87 & 0.0324 & 2.43420 & 31 & -0.05700 \\
\bottomrule
\end{tabular}
\end{table}

\subsection{Gray--Scott System: Strong Amplification Regime}
\label{sec:grayscott}

Under the boundary-initialized evaluation protocol, autonomous rollout is initialized from the final training latent state, and prediction errors accumulate progressively during autonomous rollout, indicating a transport regime in which locally injected discrepancies are progressively amplified during rollout.

Reaction--diffusion structures deteriorate much more rapidly under the Base strategy, which gradually loses the characteristic patterns and eventually produces smeared concentration fields (Fig.~\ref{fig:grayscott_main}). Mixed preserves the global structures for a substantially longer interval, whereas Gradient Clipping suppresses part of the instability but introduces localized artifacts. The large multi-step transport sensitivity ($S_{n,k}\gg1$) confirms that latent perturbations are strongly amplified during rollout; combined with the local discrepancy injection at each step, this amplification is consistent with the transport-based decomposition developed in Eq.~(16), providing a transport-based interpretation of why even small differences in training strategy can produce markedly different solution quality.

\begin{figure}[t]
\centering
\safeincludegraphics[width=\textwidth]{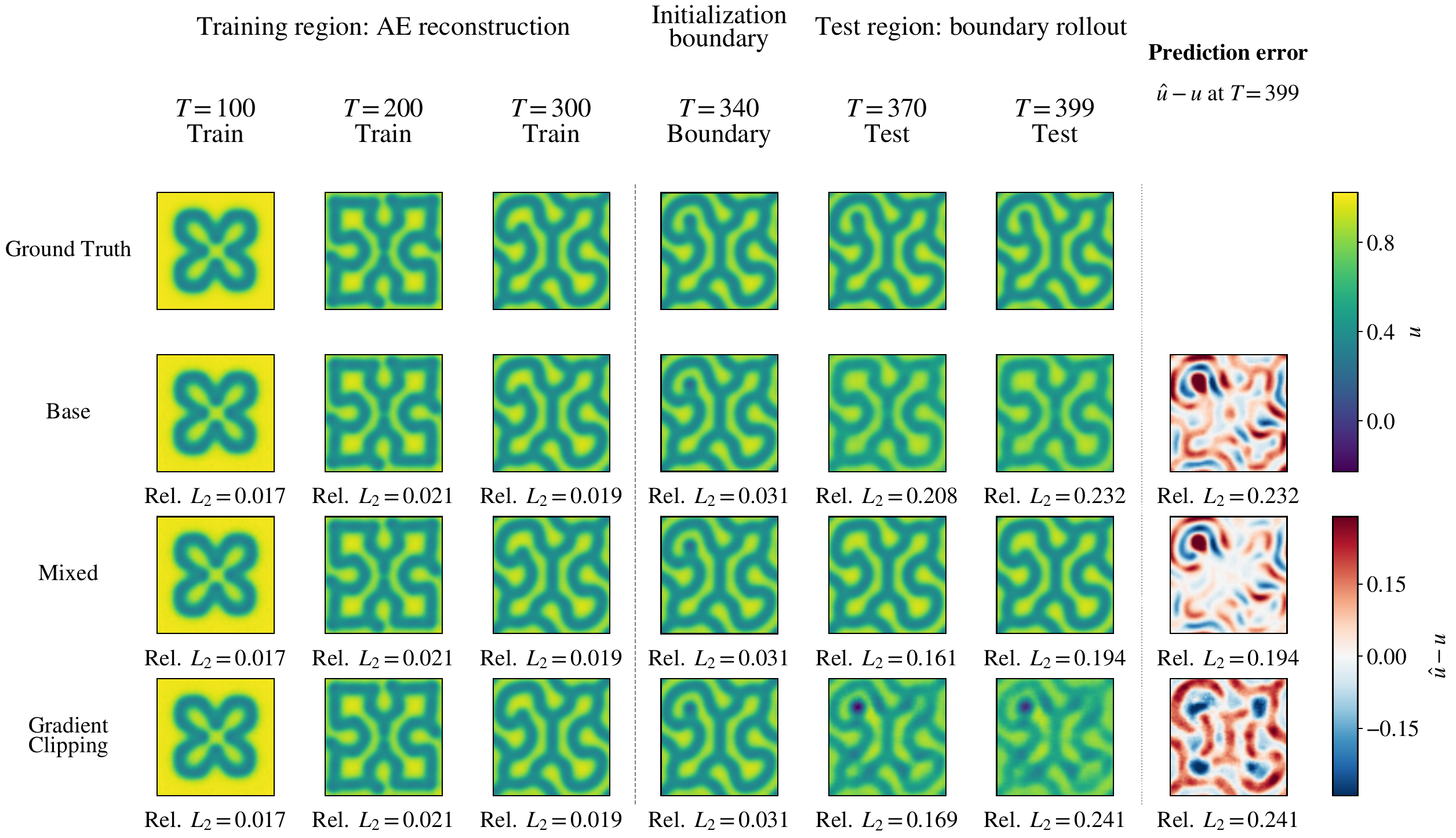}
\caption{
Representative Gray--Scott solution snapshots under boundary-initialized strict extrapolation.
Columns correspond to representative times spanning the training region, boundary initialization, and test region.
Rows compare the ground truth with decoded predictions produced by the Base, Mixed, and Gradient Clipping models.
}
\label{fig:grayscott_main}
\end{figure}

Across ten matched seeds, quantitative results confirm that Mixed achieves the lowest mean relative $L^2$ error (0.159) together with the smallest variability (standard deviation 0.016, Table~\ref{tab:grayscott_summary}). Gradient Clipping attains a comparable mean error (0.165), although its performance varies considerably across seeds, with errors ranging from 0.069 to 0.474. Statistical evidence separating Mixed and Gradient Clipping is limited for the current sample size.

\begin{table}[t]
\centering
\caption{
Gray--Scott boundary-initialized strict extrapolation results across 10 matched random seeds.
Autonomous rollout is initiated precisely at the train/test temporal boundary ($N_{\rm train}=340$).
}
\label{tab:grayscott_summary}
\begin{tabular}{lccccc}
\toprule
Method & Mean RelL2 & Median RelL2 & Std. & Min & Max \\
\midrule
Base & 0.340 & 0.279 & 0.189 & 0.164 & 0.678 \\
Mixed & 0.159 & 0.157 & 0.016 & 0.134 & 0.185 \\
Gradient Clipping & 0.165 & 0.120 & 0.125 & 0.069 & 0.474 \\
\bottomrule
\end{tabular}
\end{table}

Larger latent rollout deviations are generally accompanied by larger physical-space errors (Fig.~\ref{fig:grayscott_trajectory}). Relative to Base, Mixed maintains both quantities at consistently lower levels throughout rollout. Gradient Clipping reduces the latent rollout deviation, yet this reduction does not translate into a proportional improvement in physical-space accuracy. This observation is consistent with Eq.~(16): the latent rollout deviation reflects not only the transport amplification of perturbations but also the magnitude of the local discrepancy injection at each step, so the relationship between latent and physical errors is not governed by transport sensitivity alone.

\begin{figure}[t]
\centering
\safeincludegraphics[width=\textwidth]{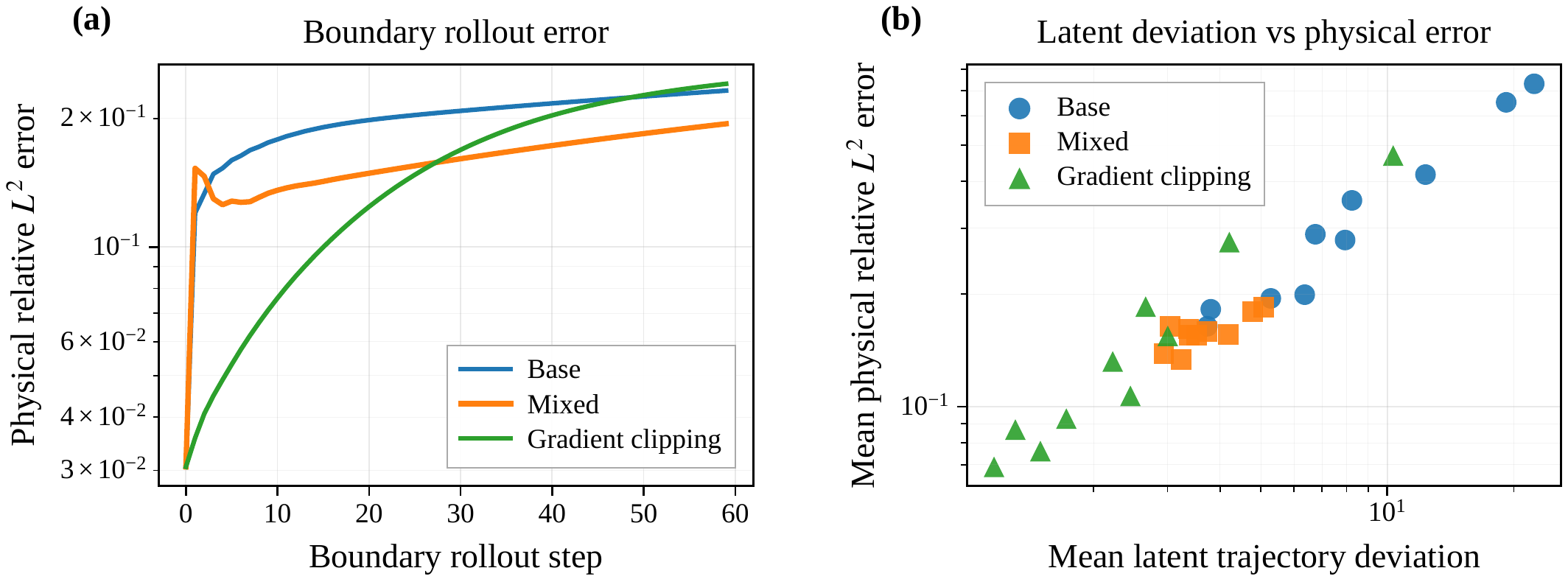}
\caption{
Interplay between physical extrapolation accuracy and latent rollout deviation.
(a) Temporal evolution of the physical-space relative $L^2$ error during autonomous boundary rollout.
(b) Cross-seed correlation between the mean latent rollout deviation and the resulting mean physical rollout error.
}
\label{fig:grayscott_trajectory}
\end{figure}

These effects are disentangled by the transport diagnostics (Fig.~\ref{fig:grayscott_jacobian}). At each step, the local discrepancy injection $\|\delta_n\|$ quantifies the error injected into the latent dynamics, whereas the multi-step transport sensitivity $\log_{10}S_k$ characterizes how these perturbations evolve during rollout. Together, the two quantities are consistent with the transport-based decomposition developed in Eq.~(16), which provides a transport-based interpretation of why similar local discrepancy injections can ultimately lead to substantially different long-horizon prediction errors: the amplification factor $S_{n,k}$ characterizes how strongly a given injection is transported and amplified during rollout, while the injection itself sets the initial magnitude of the perturbation that is subsequently transported.

\begin{figure}[t]
\centering
\safeincludegraphics[width=\textwidth]{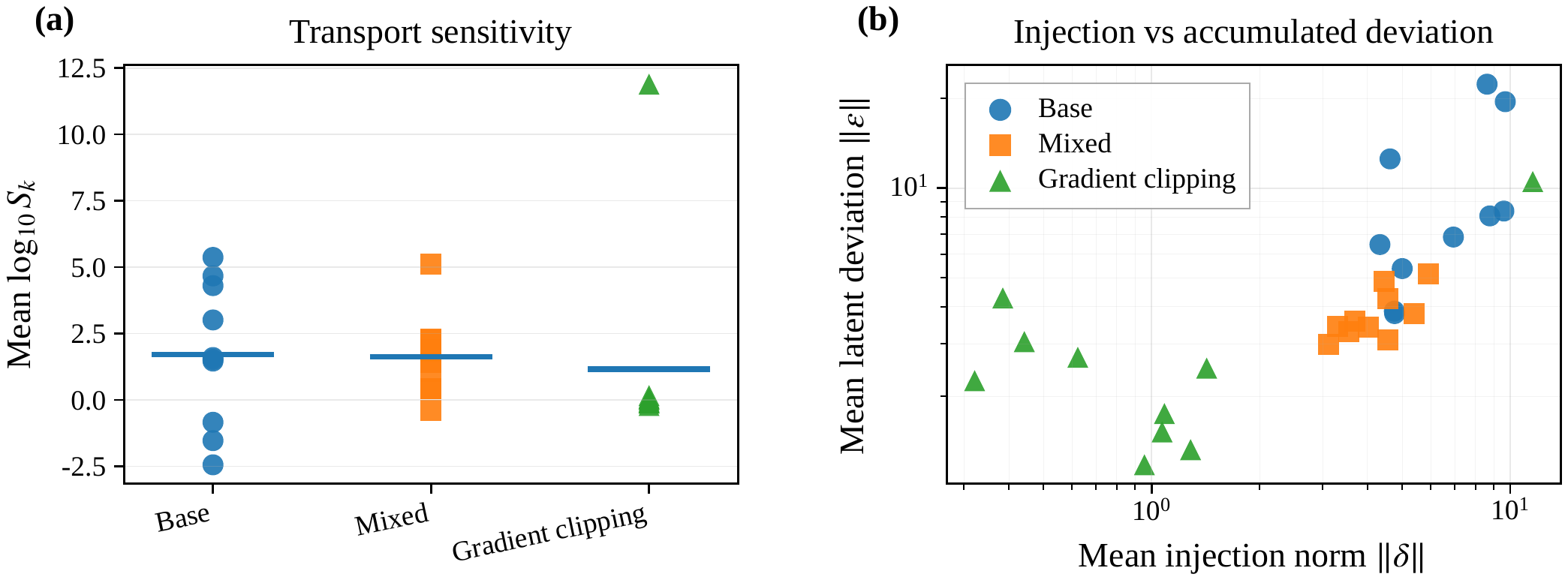}
\caption{
Jacobian-driven transport diagnostics under boundary-initialized extrapolation.
(a) Multi-step transport sensitivity profiles tracked via $\log_{10} S_k$.
(b) Scatter profile mapping the relation between local discrepancy injection and the latent rollout deviation.
}
\label{fig:grayscott_jacobian}
\end{figure}

\subsection{Transport Diagnostic Analysis}
\label{sec:transport_diagnostics}

Jacobian metrics of the learned rollout map, averaged over ten seeds, are listed in Table~\ref{tab:jacobian_summary}. Mean $\log_{10}(S_{n,k})$ is 0.006 for 2D Burgers models, compared to 1.713 for Base, 1.628 for Mixed, and 1.163 for Gradient Clipping in Gray--Scott. As a transport sensitivity diagnostic, larger values of $S_{n,k}$ indicate stronger amplification of latent perturbations, which contributes to larger latent rollout deviations $\|\epsilon_n\|$.

\begin{table}[t]
\centering
\caption{
Jacobian diagnostics for boundary-initialized rollouts averaged over ten matched seeds. 
The mean of $\log_{10}(S_{n,k})$ quantifies transport sensitivity, 
$\|\delta_n\|$ measures the local discrepancy injection, 
$\|\epsilon_n\|$ denotes the latent rollout deviation, 
and $R^2$ evaluates the correlation between transport amplification and latent rollout deviation.
}
\label{tab:jacobian_summary}
\begin{tabular}{llcccc}
\toprule
System & Method
& Mean $\log_{10}(S_{n,k})$
& Mean $\|\delta_n\|$
& Mean $\|\epsilon_n\|$
& $R^2$
\\
\midrule

2D Burgers
& Base
& 0.006
& 0.024
& 0.159
& 0.891
\\
& Mixed
& 0.006
& 0.024
& 0.158
& 0.892
\\
& Mix2
& 0.006
& 0.023
& 0.151
& 0.892
\\

\midrule

Gray--Scott
& Base
& 1.713
& 6.708
& 9.716
& 0.760
\\
& Mixed
& 1.628
& 4.254
& 3.788
& 0.816
\\
& Clipped
& 1.163
& 1.915
& 3.109
& 0.736
\\

\bottomrule
\end{tabular}
\end{table}

For Gray--Scott, Mixed achieves lower physical rollout error (0.159) than Base (0.340) despite similar transport sensitivity ($\log_{10}(S_{n,k}) = 1.628$ versus $1.713$). This observation directly illustrates the decomposition introduced in Eq.~(16): long-term rollout error is governed by the combined effect of transport amplification, captured by $S_{n,k}$, and local discrepancy injection, captured by $\|\delta_n\|$. Mixed reduces $\|\delta_n\|$ from 6.708 (Base) to 4.254 while operating under a comparable amplification regime, and this reduction in local discrepancy injection accompanies the lower latent rollout deviation $\|\epsilon_n\|$ from 9.716 (Base) to 3.788 (Mixed). The experimental results therefore support the interpretation that neither transport amplification nor local discrepancy injection alone determines long-term rollout error; rather, their joint contribution, as expressed by Eq.~(16), shapes the observed behavior. This observation is consistent with the theoretical decomposition of Proposition 2.
Across all three Gray--Scott models, the corresponding $R^2$ values remain moderately high, with Mixed attaining the largest value ($0.816$ versus $0.760$ for Base and $0.736$ for Clipped), indicating that transport sensitivity and latent rollout deviation retain a stable statistical association across the evaluated seeds. This association, however, does not imply that transport sensitivity alone dictates prediction accuracy; the latent rollout deviation also depends on the local discrepancy injection at each step, consistent with the combined formulation in Eq.~(16).

\subsection{Statistical Validation}
\label{sec:stats}

Matched seeds were evaluated using paired $t$-tests and Wilcoxon signed-rank tests (Table~\ref{tab:stats}).

For 1D and 2D Burgers, comparisons across exposure strategies yield $p > 0.20$ for all paired tests. In Gray--Scott, Gradient Clipping differs significantly from Base under both tests ($p = 0.018$ $t$-test, $p = 0.002$ Wilcoxon). For the Mixed versus Gradient Clipping comparison, the Wilcoxon signed-rank test detects a significant difference ($p = 0.020$), whereas the paired $t$-test does not reach the conventional 0.05 significance level ($p = 0.063$); the evidence separating Mixed and Gradient Clipping is therefore test-dependent and should be interpreted cautiously.

\begin{table}[t]
\centering
\caption{
Paired significance tests evaluated across matched random seeds.
}
\label{tab:stats}
\begin{tabular}{lccc}
\toprule
System & Comparison & Paired t-test & Wilcoxon \\
\midrule

1D Burgers
& Base vs Mixed
& 0.209
& 0.695
\\

2D Burgers
& Base vs Mixed
& 0.450
& 1.000
\\

2D Burgers
& Base vs Mix2
& 0.458
& 0.846
\\

Gray--Scott
& Mixed vs Clipped
& 0.063
& 0.020
\\

Gray--Scott
& Base vs Clipped
& 0.018
& 0.002
\\

\bottomrule
\end{tabular}
\end{table}

In strongly amplifying dynamics such as Gray--Scott, the influence of optimization strategy on rollout accuracy becomes more apparent, although the statistical evidence for some pairwise comparisons remains test-dependent. In contrast, seed variation dominates in systems with mild transport sensitivity such as the Burgers equations.

\section{Conclusion}
\label{sec:conclusion}

The proposed path-integral formulation separates local discrepancy injection from transport effects in learned latent dynamics. In weakly amplifying regimes such as Burgers systems, rollout performance is governed by local reconstruction accuracy. In strongly amplifying regimes such as the Gray--Scott system, transport amplification dominates long-horizon state drift, requiring explicit trajectory regularization. Incorporating manifold-aware regularization to constrain off-manifold latent solver trajectories remains a topic for future work.


\bibliographystyle{plain}
\bibliography{refs}

\end{document}